\newcommand{\Z}{\mathbb{Z}}
\newcommand{\Hidden}[1]{}
\newtheorem{theorem}{Theorem}
\newtheorem{proposition}[theorem]{Proposition}
\theoremstyle{definition}
\newtheorem{remark}[theorem]{Remark}
\newtheorem{example}[theorem]{Example}
\theoremstyle{remark}
\title[Proof of the Linear 3-tree Conjecture]{Proof of a Conjecture on the Growth of the Maximal Resistance Distance in a Linear 3--Tree}
\author{Emily J. Evans}
\address{Brigham Young University}
\email{EJEvans@math.byu.edu}
\author{Russell Jay Hendel}
\address{Towson University}
\email{RHendel@Towson.Edu}
\begin{document}

\begin{abstract} In~\cite{bef} Barret, Evans, and Francis conjectured that if $G$ is the straight linear 3-tree with $n$ vertices and $H$ is the straight linear 3-tree with $n+1$ vertices then 
	\[\lim_{n\rightarrow \infty} r_{H} (1, n+1) - r_G(1,n) = \frac{1}{14},\]
where $r_G(u,v)$ and $r_H(u,v)$ are the resistance distance between vertices $u$ and $v$ in graphs $G$ and $H$ respectively.  In this paper, we prove the conjecture by looking at the determinants of deleted Laplacian matrices.  The proof uses a Laplace expansion method on a family of determinants to determine the underlying recursion this family satisfies and then uses routine linear algebra methods to obtain an exact Binet formula for the $n$-th term. 
\end{abstract}

\maketitle

KEYWORDS:
\textit{
recursions, families of matrices, determinants,  linear 3-tree, Binet forms, resistance distance, effective resistnce, Laplacian }
\section{Introduction}
Resistance distance, also referred to as effective resistance, is a well-known metric on graphs that measures both the number of paths between two vertices in a graph and the cost of each path.  A wide variety of applications of resistance distance exist including applications to mathematical connectivity in social, biological, ecological, and transportation networks ~\cite{Applications}, mathematical chemistry~\cite{rdmatrix,carmona2014effective,Cinkir,KleinRandic, klein2002resistance,klein2004random, kem1,peng2017kirchhoff, yang2014comparison, yang2008kirchhoff}, graph theory~\cite{bapatdvi,BapatWheels,MarkK, DEVRIENDT202224,littleswim, Ghosh, klein1997graph, ZHOU20172864}, numerical linear algebra~\cite{SpielSparse}, and engineering~\cite{Barooah06grapheffective}.  Of special interest to graph theorists and mathematical chemists is the calculation of resistance distance in families of graphs.  We say a graph is a member of a family if a particular structure is maintained as the number of nodes grows.

Given a graph $G$ the resistance distance between two nodes is determined by considering the graph as an electric circuit where each edge is represented by a resistor whose resistance is the inverse of the edge weight. Given any two nodes $i$ and $j$ assume that one unit of current flows into node $i$ and one unit of current flows out of node $j$.  The potential difference $v_i - v_j$ between nodes $i$ and $j$ needed to maintain this current is the {\it resistance distance} between $i$ and $j$. 

In this paper, we address an open conjecture of Barret, Evans, and Francis~\cite{bef} regarding the asymptotic behavior of the resistance distance in a so-called straight linear 3-tree.  We recall that a straight linear $3$-tree is a graph $G_n$ on $n$ vertices whose adjacency matrix is symmetric, banded, with the first through third super and sub diagonals equal to one, and all other entries equal to zero.  In other words $G_n$ is the graph whose (0,1)-adjacency matrix is defined by 
\[
a_{ij}=\begin{cases}1 &\text{if $0 < |i-j| \leq 3$}\\0 & \text{otherwise.}\end{cases}\]
In~\cite{bef} it was conjectured that if $G$ was the straight linear 3-tree with $n$ vertices and $H$ was the straight linear 3-tree with $n+1$ vertices then 
\begin{equation}\label{equ:conjecturetoprove}
[\lim_{n\rightarrow \infty} r_{H} (1, n+1) - r_G(1,n) = \frac{1}{14},
\end{equation}
where $r_G(u,v)$ and $r_H(u,v)$ are the effective resistance between vertices $u$ and $v$ in graphs $G$ and $H$ respectively.

In~\cite{bef} circuit transformations were used to obtain a result similar to the conjectured result for linear 2-trees.  Many other possible techniques exist to determine resistance distance in graphs including the use of matrices (the combinatorial Laplacian) and graph theoretic approaches. For a summary, including worked examples see~\cite{littleswim}.

Among methods that use the Laplacian matrix, one technique uses determinants associated with the underlying matrix with specific rows and columns deleted~\cite{bapatdvi}.  In this paper, we approach the computation of a determinant by calculating the recursion satisfied by the underlying family. These recursions allow us to compute Binet forms and consequently compute closed-formula for resistances. The use of recursive relationships satisfied  by families of determinants is not a new idea; many well-known formulas exist
(e.g., tridiagonal matrices~\cite{ELMIKKAWY2004669}, pentadiagonal matrices~\cite{pentadiag,recursion},
block tridiagonal matrices~\cite{molinari2008determinants}, and Toeplitz matrices~\cite{li2011calculating}).  By performing a Laplace expansion to calculate the determinants, \cite{recursion} shows that the determinants of the general pentadiagonal family of matrices governed by five parameters, satisfy a sixth-order recursion whose roots can be explicitly calculated. This allows the authors to calculate a closed form for the values of determinants of these matrices; the authors show that the resulting formula for resistance distance is much quicker than other methods for calculating resistance distance.

\section{Notation and important background results}
We recall that the Laplacian matrix associated with a graph is defined to be $L=D-A$ where $A$ is the adjacency matrix (in our case a symmetric heptadiagonal matrix) and $D$ is a diagonal matrix whose entries correspond to the degree of each vertex.  Since we will be handling families of square matrices, we will use a superscript to denote the size of the matrix, e.g., $L^n$ will correspond to a Laplacian matrix on a graph with $n$ vertices.  Similarly, $L^{n}_{i,j}$ refers to the entry in row $i$ column $j$ of $L^{n}.$  

If $A$ and $B$ are sets of indices (or singleton indices) and $M$ is an arbitrary matrix, then  $M^{n}(A|B)$ is the matrix obtained from $M^{n}$ by deleting the rows whose indices are in $A$ and deleting the columns whose indices are in $B.$  If the sets are singletons we will drop the braces for notational simplicity.  Notice that in interpreting say 
$L^{n}(1|m)$ the operation of taking the $n$-th member of the family is performed prior to deleting the row and column; thus the resulting matrix has size $n-1 \times n-1.$  $Det(M^{n})$ denotes the determinant of $M^{n}.$ 

We make use of a formula from Bapat~\cite{bapatdvi} to compute the resistance distance between nodes $u$ and $v$ of $L^{n}.$ Since we are interested in the maximal resistance distance in a given straight linear 3-tree of size $n$, we let  $u$ and $v$ correspond to nodes 1 and $n$ where nodes 1 and $n$ are the two vertices of degree 3. More precisely the equation we will use to determine resistance distance is:
\begin{equation}\label{equ:Bapat}
        \text{Resistance distance between nodes 1 and $n$} = 
        \frac{Det(L^n(\{1,n\}|\{1,n\}))}{Det(L^n(1|1))}.
\end{equation}
Equation \eqref{equ:Bapat} is also valid if the denominator is $Det(L^n(n|n)).$

\section{Overview of the Laplace Expansion Procedure}
As mentioned in the introduction, we calculate the resistance of the linear 3-tree by mimicking the methods presented in \cite{recursion}. In that paper, Laplace Expansion was applied to the family of pentadiagonal matrices. This resulted in a set of equations in determinant families. The resulting set of equations was then solved to yield a recursive relationship satisfied by a family of determinants. Using standard techniques, the characteristic polynomial corresponding to the recursion may be used to compute a Binet form which allows explicit computation of the resistance.

This procedure is done in \cite{recursion} without any explicit formalism for the procedure.  The method when applied to the linear 3-tree is more complex, and therefore, some formalization is useful.  

The Laplace expansion of a matrix $M^{n}$ along the first row is given by  
\begin{equation}\label{equ:basicsteprow}
    Det(M^{n}) = \displaystyle \sum_{M^{n}_{1,j} \neq 0} (-1)^{j+1} M^{n}_{1,j}  Det(M^{n}(1|j)), 
\end{equation}
with a similar formula for expansion along the first column. To restate \eqref{equ:basicsteprow} in terms of family of matrices we use the backward shift operator $y$ which operates on a sequence $\{s_n\}$ (of complex numbers) by $y s_n = s_{n-1}$ (note that we indicate the operation of $y$ by simple juxtaposition without parenthesis).  Thus \eqref{equ:basicsteprow} may be rewritten
\begin{equation}\label{equ:basicstepfamily}
    Det(M) = \displaystyle \sum_{M_{1,j} \neq 0} (-1)^{j+1} M_{1,j} y Det(M(1|j)), 
\end{equation}
which is a purely formal notation interpreted to mean that for each $n-th$ matrix of the underlying families the resulting equation in determinants is true.

\subsection{The Laplace Expansion Procedure}

The pseudo-code for the Laplace Expansion procedure is presented below; however, a simple example  is presented first to clarify the concepts involved. For this example, we use the path graph, which may be represented as a straight line with $n$ vertices with the corner vertices having degree 1 and other vertices having degree 2. For $n=6,$  the Laplacian is given by \[L^{6}=\left(\begin{smallmatrix}1 & -1 & 0&0&0&0\\-1&2&-1&0&0&0\\0&-1&2&-1&0&0\\0&0&-1&2&0&0\\0&0&0&-1&2&-1\\0&0&0&0&-1&1 \end{smallmatrix}\right).\] 

The Bapat formula \eqref{equ:Bapat} requires  computing recursions separately for the numerators and denominators. We suffice for this illustrative example of the Laplace expansion procedure with treating the denominator, whose underlying matrix of size $4 \times 4$ is given by \[D^{4}=L^6(\{1,6\}|\{1,6\})=\left(\begin{smallmatrix}2 & -1 & 0&0\\-1&2&-1&0\\0&-1&2&-1\\0&0&-1&2
\end{smallmatrix}\right).\]

\begin{example}\label{exa:path}  The procedure, consisting of a series of Laplace expansions.  Each Laplace expansion affects three queues: 
\begin{itemize}
    \item \textbf{QTodo} which stores the matrix families needing an expansion operation,
    \item \textbf{QEquations} which stores the resulting determinant identities,  and 
    \item \textbf{QDone} which stores all matrix families already used in the procedure.
\end{itemize}   The matrices in \textbf{QDone} are sequentially indexed. We initialize the process by setting \textbf{QTodo}=$\{D(0)\}$ (with $D(0)=D$), and setting the queues \textbf{QDone} and \textbf{QEquations} to be the empty set.  The process then proceeds as follows

\begin{enumerate}

\item[\textbf{Step 1:}]\textbf{QTodo}=$\{D(0)\}$ so we add $D(0)$ to $\textbf{QDone}$, and remove it from $\textbf{QTodo}.$  Then using \eqref{equ:basicstepfamily} we expand $D(0)$ as  
$D(0) = 2 yD(0)(1|1) + y D(0)(1|2).$ We notice that 
$D(0)(1|1) = D(0)$ while $D(0)({1|2}) =\left(\begin{smallmatrix}-1&-1&0&0\\0&2&-1&0\\0&-1&2&-1\\0&0&-1&2
\end{smallmatrix}\right)$ is new. We set
$D(0)({1|2}) = D(1),$ place $D(1)$ in \textbf{QTodo} and place $D(0) = 2 yD(0) + y D(1)$ in \textbf{QEquations.}

\item[\textbf{Step 2:}] \textbf{QTodo} = $\{D(1)\}.$ We add $D(1)$ to $\textbf{QDone},$  and remove it from $\textbf{QTodo}.$  We then Laplace Expand  
$D(1) = -yD(1)({1|1}).$ We notice that 
$D(1)(1|1) = D(0).$  We set
$D(1)({1|2}) = D(0),$  and place $D(1) = - yD(0)$  in \textbf{QEquations.}
\end{enumerate}

Since \textbf{QTodo} is empty the procedure terminates. Although the procedure is not guaranteed to terminate it does terminate for the linear 3-tree. 
\end{example}
 The set of equations in \textbf{QEquations} will be solved in the next section. First we present the pseudo code for the Laplace expansion process and discuss its memory and run time requirements.

\begin{verbatim}
PROCEDURE LaplaceExpand (Takes a matrix family $M(0)=M$ 
and produces, as needed, a set of new matrix families and 
a set of equations in matrix families. 
The process may or may not terminate)
INITIALIZE:
    QTodo = {M(0)}
    QDone = { }
    QEquations = { }
    Counter=0

WHILE Not IsEmpty(QTodo)
    FOR EACH q in QTodo
        DeleteFrom(QTodo, q)
        AddTo(QDone,q)
        FOR EACH m in LaplaceExpandMatrices(q)
            IF Exists i, m =M(i) in QDone THEN
                 Rename(m, M(i))
            ELSE Rename(m, M(Counter++))
                 AddTo(QTodo,m)
                 AddTo(QEquations, LaplaceExpandEquations(q))
            END IF
        END FOR EACH
    END FOR EACH
END WHILE
\end{verbatim}

The sub-procedure 
\textbf{LaplaceExapndEquations} refers to \eqref{equ:basicstepfamily}.

Both authors independently reduced by hand the matrix families associated with the Laplacian of the linear 3-tree (for the numerator and denominator).  Additionally, to ensure correctness, a Matlab and Mathematica program were written to check work. For those who wish to experiment with it, the Mathematica program is presented as an Appendix in \cite{Automated}. All four attempts produced the same underlying recursion. 
 
 However, there are areas to improve efficiency. The authors' attempts resulted in 20 new matrix families, while the Mathematica attempts resulted in 80 new matrix families (The difference between the humans and software lies in the choice of the row or the column on which to Laplace expand; the software only expanded along the first row or column depending on which had fewer 
 non-zero entries).
 The size of the matrices was adjusted to 10, since it preserved all features of the Laplacian of the linear 3-tree. Therefore the memory requirements for even 80 matrices is insignificant. Because the process is formal with no numerical computations of determinants, the run time is also insignificant.

\section{Solving the Set of Determinant Equations}
Returning to Example \ref{exa:path}, when the procedure terminated there were two equations in the \textbf{QEquations} queue: $D(0)=2y D(0)+y D(1)$ and
$D(1)= -y D(0).$ Substituting the left-hand side of the second equation into the first equation yields an equation in one matrix family, $(y^2-2y+1)D(0)=0.$ Recalling the interpretation of $y$ as a backward shift operator, this equation implies that for all $n$ where defined,
$D^{n}(0) - 2 D^{n-1}(0) + D^{n-2}(0).$ Thus  $y^2-2y+1$ is a characteristic polynomial for the family $D(0).$ We may then derive the Binet form using standard methods.

This method of solution is applicable to the general case.

\begin{theorem} A system of $n \ge 1$ identities in families of determinants,
\begin{equation}\label{eq:intheorem}
    D(i) = \sum_{j=1}^n a_{i,j} D(j), \qquad  1 \le i \le n
\end{equation}
with the $a_{i,j}$ integer polynomials  in a variable $y$
may be solved, that is, may be reduced to a single identity in one matrix family. 
\end{theorem}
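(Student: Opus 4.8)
The plan is to read the system \eqref{eq:intheorem} as a homogeneous linear system over the polynomial ring $\Z[y]$ and then to eliminate the families $D(2),\dots,D(n)$ one at a time, in the same spirit as the substitution carried out in Example~\ref{exa:path}, until only $D(1)$ survives. Writing $b_{i,j}=\delta_{i,j}-a_{i,j}\in\Z[y]$, the system becomes $\sum_{j=1}^{n}b_{i,j}D(j)=0$ for $1\le i\le n$, that is $B\,\vec D=\vec 0$ with $B=I-A$ and $A=(a_{i,j})$. The observation that makes everything go through formally is that any $\Z[y]$-linear combination of valid identities among the determinant families is again a valid identity; so it is enough to exhibit one such combination in which only $D(1)$ occurs.

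I would prove this by induction on $n$, but for the inductive statement one must allow arbitrary systems of the shape $\sum_{j=1}^{n}b_{i,j}D(j)=0$, $1\le i\le n$, with $b_{i,j}\in\Z[y]$, since elimination does not preserve the special normalization $D(i)=\sum_j a_{i,j}D(j)$ of \eqref{eq:intheorem}. The base case $n=1$ is immediate, the single identity $b_{1,1}D(1)=0$ already involving just one family. For the step, if every coefficient $b_{i,n}$ of $D(n)$ is the zero polynomial, then $D(n)$ does not occur, so I delete one of the $n$ identities and invoke the inductive hypothesis on the remaining $n-1$. Otherwise I pick $k$ with $b_{k,n}\neq 0$ and, for each $i\neq k$, replace identity $i$ by $b_{k,n}\cdot(\text{identity }i)-b_{i,n}\cdot(\text{identity }k)$; this is a $\Z[y]$-combination that cancels the $D(n)$ term and leaves $n-1$ identities in $D(1),\dots,D(n-1)$, to which induction applies. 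Either way one arrives at a single identity $p(y)\,D(1)=0$. The point that needs care here is that one never \emph{divides} by a polynomial; cross-multiplication suffices because the system is homogeneous and we only need a consequence of it, not an equivalent system, so the whole reduction stays inside $\Z[y]$.

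It is worth recording the closed form that the same argument produces: multiplying $B\,\vec D=\vec 0$ on the left by the adjugate $\mathrm{adj}(B)$, whose entries lie in $\Z[y]$ because those of $B$ do, gives $\det(B)\,D(k)=0$ for every $k$, since $\mathrm{adj}(B)\,B=\det(B)\,I$; hence one may take $p(y)=\det(I-A)$, and the characteristic polynomial governing $D(1)$ is obtained by reversing the coefficients of this determinant. The elimination itself is thus the routine part; the genuinely substantive point, which this theorem deliberately sidesteps but which matters for the sections that follow, is that $p(y)$ not vanish identically, for only then is $p(y)\,D(1)=0$ an honest recursion yielding a Binet form. For the Laplacian numerator and denominator families of the straight linear $3$-tree this non-vanishing is confirmed directly once the explicit polynomial is written down, and that verification — not the bookkeeping above — is where the real work lies.
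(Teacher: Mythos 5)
Your proposal is correct and its core is the same Gaussian-elimination-over-$\Z[y]$ idea as the paper's proof: the paper also moves to a homogeneous form $b_{i,i}D(i)=\sum_{j\neq i}b_{i,j}D(j)$, multiplies the $i$-th equation by the pivot coefficient $b_{n,n}$, and substitutes the $n$-th equation to eliminate $D(n)$, repeating until one identity $p(y)D(0)=0$ remains. Where you differ is in rigor and in packaging. First, you treat the pivot honestly: the paper always pivots on $b_{n,n}$ and does not address what happens when $b_{n,n}=0$ (multiplying by it annihilates the equations and fails to eliminate $D(n)$), whereas your choice of any $k$ with $b_{k,n}\neq 0$, together with the separate case where the entire column vanishes, closes that gap; your explicit broadening of the inductive hypothesis to arbitrary homogeneous systems is also needed and is only implicit in the paper. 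Second, your adjugate observation, $\mathrm{adj}(B)B=\det(B)I$ giving $\det(I-A)\,D(k)=0$ for every $k$, is not in the paper at all; it buys a closed form for the annihilating polynomial in one step and makes clear that the same $p(y)$ works for every family simultaneously, at the cost of typically producing a non-minimal annihilator (the paper's Proposition~\ref{pro:minimalpolynomial} is its device for trimming down to the minimal one). Finally, your caveat that $p(y)$ could vanish identically is a real issue the paper's proof silently assumes away; flagging it, and noting that non-vanishing must be checked on the concrete numerator and denominator families, is a genuine improvement rather than a digression.
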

\begin{proof} If for $1\le i \le n$
$a_{i,i} \neq 0,$ then for each $i$ we may rewrite \eqref{eq:intheorem} as
$$
    (1-a_{i,i}) D(i) = \sum_{j=1, j \neq i}^n a_{i,j} D(j).
$$
Thus we may replace the system of $n$ determinant identities with the following equivalent system.
\begin{equation}\label{equ:system}
   b_{i,i} D(i) = \sum_{j=1, j \neq i}^n b_{i,j} D(j), \qquad  1 \le i \le n,
\end{equation}
with $b_{i,j}$ integer polynomials in $y.$ 

We use the phrase the $k$-th equation to refer to the equation with $i=k$ in \eqref{equ:system}.  
If $n>1,$ we first multiply the $i$-th equation, $1 \le i \le n-1,$ by $b_{n,n}.$ We may then substitute the right hand side  of the $n$-th equation for  the summands of the form $b_{i,n} b_{n,n}  D(n)$ of the resulting $i$-th equations, $1 \le i \le n-1.$ We thus  obtain a system of equations similar to \eqref{equ:system} except that it consists of $n-1$ equations in $n-1$ determinant identities with coefficients of integer polynomials in $y.$

We may continue this reduction process till we are left with one equation in one determinant family. Collecting terms we then have an equation of the form $p D(0) =0$
 with $p$ an integer polynomial in $y.$ It follows that $p(y)$ is a characteristic polynomial for the sequence $D(0).$
\end{proof}

\begin{remark} Because $\Z$  is a principle ideal domain, the minimal polynomial for a sequence generates the ideal of characteristic polynomials. It follows that if we have any characteristic polynomial we can test each factor to ascertain if it is the minimal polynomial. The following proposition provides such a test.
\end{remark}

\begin{proposition}\label{pro:minimalpolynomial} Let $A(y),$$ B(y),$  and $C(y)$ denote polynomials in the backward shift operator $y.$ Further let $x=\{x_n\}_{n \ge -\infty},$ be an arbitrary doubly infinite sequence of complex numbers.  If the following three conditions hold: (i) $A(y)$ annihilates $x$  (ii)  $A(y)= B(y) C(y),$ and (iii) For some integer $n_1,$ $C(x_n) = 0, \text{ for } n=n_1,n_1+1,\dotsc, n_1+deg(B)-1$, then $C$ annihilates $x.$ \end{proposition}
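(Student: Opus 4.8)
The plan is to set $z := C(y)\,x$, the doubly infinite sequence obtained by applying the operator $C(y)$ to $x$, and to prove that $z$ is identically zero. Since polynomials in the single operator $y$ commute and their actions compose associatively, hypotheses~(i) and~(ii) give
\[
0 = A(y)\,x = \bigl(B(y)C(y)\bigr)x = B(y)\,z,
\]
so $B(y)$ annihilates $z$. Writing $B(y) = \sum_{k=0}^{d} b_k y^k$ with $d=\deg(B)$ and $b_d\neq 0$, the statement $B(y)z=0$ says that the linear relation $\sum_{k=0}^{d} b_k z_{n-k}=0$ holds for every integer $n$, and hypothesis~(iii) says exactly that $z$ vanishes at the $d$ consecutive indices $n_1,n_1+1,\dots,n_1+d-1$. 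We may assume $B$ is a nonzero polynomial of positive degree, the cases $\deg(B)=0$ (then $B(y)z=0$ forces $z=0$ at once) being immediate.

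The heart of the matter is the elementary fact that a doubly infinite sequence annihilated by $B(y)$ and vanishing on a block of $\deg(B)$ consecutive indices must vanish everywhere. Concretely, let $m=\min\{k:b_k\neq 0\}$, so that $0\le m\le d$ and both $b_m$ and $b_d$ are nonzero. Rearranging $B(y)z=0$ at a generic index to isolate the term of smallest index yields $z_{n-m} = -\,b_m^{-1}\sum_{k=m+1}^{d} b_k z_{n-k}$, which expresses $z_{n-m}$ through the $d-m$ entries immediately preceding it; starting from the block of zeros furnished by~(iii) and inducting upward gives $z_j=0$ for all $j\ge n_1$. Symmetrically, isolating the term of largest index yields $z_{n-d} = -\,b_d^{-1}\sum_{k=m}^{d-1} b_k z_{n-k}$, expressing $z_{n-d}$ through the $d-m$ entries immediately following it, and inducting downward from the same block gives $z_j=0$ for all $j\le n_1+d-1$. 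Together these force $z\equiv 0$, that is, $C(y)x=0$, which is the assertion.

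I expect the only point needing care — rather than a genuine obstacle — to be the index bookkeeping in the two inductions, in particular the need to accommodate a possibly vanishing constant term of $B(y)$: one isolates the \emph{lowest} nonzero coefficient $b_m$ so that the upward recursion is solvable, and then checks that the block of $\deg(B)=d$ consecutive zeros supplied by~(iii) is still long enough (it has length $d\ge d-m$) to seed both the upward and the downward step. A cleaner but inessential alternative is to first reduce to the case $b_0\neq 0$ by factoring the largest power $y^m$ out of $B(y)$, using that $y^m\bigl(B'(y)z\bigr)=0$ is equivalent to $B'(y)z=0$; this trades a little notation for a standard order-$(d-m)$ recurrence, after which the vanishing of $z$ on $d$ (hence on $d-m$) consecutive indices is visibly enough to propagate in both directions.
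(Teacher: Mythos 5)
Your proposal is correct and follows essentially the same route as the paper's proof: observe that $B(y)$ annihilates $z=C(y)x$, then use the block of $\deg(B)$ consecutive zeros to propagate the vanishing of $z$ in both directions. The only difference is that you spell out the two inductions and the edge case of a vanishing constant term of $B$, which the paper compresses into the single assertion that a sequence satisfying an order-$\deg(B)$ recursion is determined by $\deg(B)$ consecutive values.
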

\begin{proof}   We know that $A=BC$ annihilates $x.$ Hence,  $B$ annihilates the sequence $Cx$ (i.e. the operator $C$ applied to the sequence $x$).  Since $B$ is of degree $deg(B),$ corresponding to a recursion of order $deg(B)$ satisfied by the sequence $\{Cx\},$  values of this sequence are determined by any $deg(B)$ consecutive values. It follows that if $Cx=0$ on $deg(B)$ consecutive terms then it must identically equal 0. \end{proof}

\begin{remark} The requirement of a doubly infinite sequence is not restrictive. If the sequence $\{x_n\}_{n \ge c}$ for some integer constant $c$ satisfies a recursion then we may extend this sequence to  a doubly infinite sequence by applying the underlying recursion backward.\end{remark}

\section{Closed Formula for Resistance in the Linear 3-tree} This section applies the methods of the previous sections to the linear 3-tree providing closed forms that will allow us to prove \eqref{equ:conjecturetoprove},
 and additionally provide an exact formula for the underlying sequence which allows computation of error terms.

The $n \times n$ Laplacian matrix, $L^{n},$ of the straight linear 3--tree is given by 
\begin{equation}\label{equ:PLinear3}
L_G=\begin{bmatrix}	
3 & -1 & -1 & -1 & \dotsc  &0 &0 &  0 & 0\\
-1 & 4 & -1 & -1 & \dotsc  &0 &0 &  0 & 0 \\ 
-1 & -1 & 5 & -1 &\dotsc   &0 & 0 & 0 & 0\\
-1 & -1 & -1 & 6 &\dotsc   &0 & 0 & 0 & 0  \\
\vdots & \ddots & \ddots & \ddots &\ddots &\ddots
\ddots & \ddots &  \vdots \\
0 & 0 & 0 & 0 & \dotsc &6  &-1  &-1 & -1  \\
0 & 0 & 0 & 0 & \dotsc &-1  &5  & -1 & -1  \\
0 & 0 & 0 & 0 & \dotsc &-1  &-1 &  4 & -1 \\
0 & 0 & 0 & 0 & \dotsc &-1  &-1  & -1 & 3\\
\end{bmatrix}.
\end{equation}
The center of the matrix continues with sixes on the diagonal and negative ones on the first three super and sub diagonals.    

To apply \eqref{equ:Bapat} we first define the numerator and denominator matrix sequences, by
$$
    D^{n}= L^{n+2}(\{1,n\}|\{1,n\}) \qquad
    N^{n} =L^{n+1}(1|1).
$$

Using this notation, \eqref{equ:Bapat} becomes
\begin{equation}
\label{equ:Bapat1}
 \text{Resistance distance between nodes $1$ and $n$}
 = 
 \frac{Det(N^{n-1})}{Det({D^{n-2})}}.
\end{equation}

Applying the methods of the previous sections, the sequence $\{Det(D^{n})\}_{n \ge 5}$ satisfies the recursion corresponding to the characteristic polynomial (or equivalently the annihilator) $D(X)$ given by
\begin{equation}\label{equ:denominator}
    D(X)=(X-1) \left(X^4-4 X^3-X^2-4 X+1\right),
\end{equation}
with roots
\begin{multline*}
\{r_1,r_2,r_3,r_4,r_5\}= 
\left\{1,\frac{1}{2} \left(-\sqrt{7}-i \sqrt{4 \sqrt{7}-7}+2\right),\frac{1}{2} \left(-\sqrt{7}+i \sqrt{4 \sqrt{7}-7}+2\right),\right.\\ \left.\frac{1}{2} \left(\sqrt{7}-\sqrt{4 \sqrt{7}+7}+2\right),\frac{1}{2} \left(\sqrt{7}+\sqrt{4 \sqrt{7}+7}+2\right)\right\}
\end{multline*}
while the sequence $\{Det(N^{n})\}_{n \ge 4}$ satisfies the recursion corresponding to the characteristic polynomial, $N(X)$ given by
$$
 N(X)=(X-1)^2 \left(X^4-4 X^3-X^2-4 X+1\right)^2 \left(X^4+3 X^3+6 X^2+3 X+1\right)
$$
with roots  
\begin{multline*}
\{r_1,r_2,\dotsc,r_9\}= \left\{r_1,\dotsc, r_5,  
\frac{i \sqrt{7}}{4}-\frac{1}{2} \sqrt{-\frac{7}{2}-\frac{1}{2} 3 i \sqrt{7}}-\frac{3}{4}, 
\frac{i \sqrt{7}}{4}+\frac{1}{2} \sqrt{-\frac{7}{2}-\frac{1}{2} 3 i \sqrt{7}}-\frac{3}{4},\right.\\\left.-\frac{i \sqrt{7}}{4}-\frac{1}{2} \sqrt{-\frac{7}{2}+\frac{3 i \sqrt{7}}{2}}-\frac{3}{4}, -\frac{i \sqrt{7}}{4}+\frac{1}{2} \sqrt{-\frac{7}{2}+\frac{3 i \sqrt{7}}{2}}-\frac{3}{4} \right\},
\end{multline*}
with the multiplicity of $r_1,\dotsc,r_5$ equaling 2.

To clarify the limits, the recursion $Det(D^{n+5}) = 5 Det(D^{n+4}) - 3 Det(D^{n+3})+ 3 Det(D^{n+2})-5 Det(D^{n+1}) + Det (D^{n})$  is true for $n=5$ but not for $n<5.$ (The corresponding cut-off point  for the numerator sequence is $n=4$.) This is due to degeneracies in the Laplacian for small $n.$ Attempts to calculate the Binet Form can create numerical overflow errors if the roots are large. To avoid this we define sister sequences of integers
$\tilde{D}^{n}$ and $\tilde{N}^{n}$ agreeing with $Det(D^{n})$ and $Det(N^{n})$ for $n \ge 5$ and $n  \ge 4$ respectively but satisfying the underlying recursions for all integers. The construction of these sister sequences is done by simply applying the respective underlying recursions backward. 

It is then straightforward to solve for the coefficients in the Binet form for the sister sequences. Since the sister sequences agree with the original sequences for $n \ge 5$ and $n \ge 4$ we will have closed forms for them as well. 

For the sequence $\{\tilde{D}^{n}\}$ we solve the system of
5 equations in 5 unknowns, $ \sum_{i=1}^5 d_i r_i^j, j \in \{0,1,2,3,4\}$ and obtain 
\begin{multline*} \{d_1,\dotsc, d_5 \} = 
 \left\{-\frac{8}{7}, \frac{1}{42} \left(-6 \sqrt{7}+5 i \sqrt{4 \sqrt{7}+7}-2 i \sqrt{7 \left(4 \sqrt{7}+7\right)}+12\right),\right.\\
\frac{1}{42} \left(-6 \sqrt{7}-5 i \sqrt{4 \sqrt{7}+7}+2 i \sqrt{7 \left(4 \sqrt{7}+7\right)}+12\right),\\ \frac{1}{42} \left(5 \sqrt{4 \sqrt{7}-7}+2 \left(3 \sqrt{7}+\sqrt{7 \left(4 \sqrt{7}-7\right)}+6\right)\right),\\ \left.\frac{1}{42} \left(6 \left(\sqrt{7}+2\right)-5 \sqrt{4 \sqrt{7}-7}-2 \sqrt{7 \left(4 \sqrt{7}-7\right)}\right)\right\}.
\end{multline*}
As can be seen these roots lie in a quartic extension of the rationals which consist of a tower of quadratic extensions.  

Similarly, for the numerator, we solve the system of 14 equations in 14 unknowns
$$ \sum_{i=1}^9 n_i(j) r_i^j, j \in \{1, \dotsc, 14\},$$ with the $n_i$ constant integer functions  for $i=6,7,8,9$ and linear polynomials, say, $n_i(X)=n_{i,0} + n_{i,1} x$ for $i=1,2,3,4,5.$ This gives the following set of coefficients ( residing in a tower of quadratic extensions of the rationals):
\begin{multline*}
\{n_{0,0},n_{0,1},
\dotsc,
n_{5,0},n_{5,1},
n_6,\dotsc,n_9\}=
\left\{-\frac{12}{49},-\frac{4}{49},\right.\\
\frac{-210 \sqrt{7}-28 i \sqrt{4 \sqrt{7}-7}+155 i \sqrt{7 \left(4 \sqrt{7}-7\right)}+693}{8232},  
\frac{-9 \sqrt{7}-i \sqrt{4 \sqrt{7}-7}+2 i \sqrt{7 \left(4 \sqrt{7}-7\right)}+24}{1176},\\
\frac{-210 \sqrt{7}+28 i \sqrt{4 \sqrt{7}-7}-155 i \sqrt{7 \left(4 \sqrt{7}-7\right)}+693}{8232},
\frac{-9 \sqrt{7}+i \sqrt{4 \sqrt{7}-7}-2 i \sqrt{7 \left(4 \sqrt{7}-7\right)}+24}{1176},\\
\frac{210 \sqrt{7}-28 \sqrt{4 \sqrt{7}+7}-155 \sqrt{7 \left(4 \sqrt{7}+7\right)}+693}{8232}, 
\frac{9 \sqrt{7}-\sqrt{4 \sqrt{7}+7}-2 \sqrt{7 \left(4 \sqrt{7}+7\right)}+24}{1176},\\
\frac{210 \sqrt{7}+28 \sqrt{4 \sqrt{7}+7}+155 \sqrt{7 \left(4 \sqrt{7}+7\right)}+693}{8232},\frac{9 \sqrt{7}+\sqrt{4 \sqrt{7}+7}+2 \sqrt{7 \left(4 \sqrt{7}+7\right)}+24}{1176},\\
\frac{\sqrt{-1+\frac{3 i}{\sqrt{7}}} \left(4 \sqrt{7}-7\right) \left(4 \sqrt{7}+7\right) \left(\sqrt{2} \left(41-27 i \sqrt{7}\right)-9 \sqrt{-7-3 i \sqrt{7}}-21 i \sqrt{7 \left(-7-3 i \sqrt{7}\right)}\right)}{98784},\\
\frac{i \sqrt{-1+\frac{3 i}{\sqrt{7}}} \left(4 \sqrt{7}-7\right) \left(4 \sqrt{7}+7\right) \left(\sqrt{2} \left(27 \sqrt{7}+41 i\right)+9 i \sqrt{-7-3 i \sqrt{7}}-21 \sqrt{7 \left(-7-3 i \sqrt{7}\right)}\right)}{98784}, \\
\frac{\sqrt{-1-\frac{3 i}{\sqrt{7}}} \left(4 \sqrt{7}-7\right) \left(4 \sqrt{7}+7\right) \left(\sqrt{2} \left(41+27 i \sqrt{7}\right)-9 \sqrt{-7+3 i \sqrt{7}}+21 i \sqrt{7 \left(-7+3 i \sqrt{7}\right)}\right)}{98784}, \\
\left.\frac{i \sqrt{-1-\frac{3 i}{\sqrt{7}}} \left(4 \sqrt{7}-7\right) \left(4 \sqrt{7}+7\right) \left(\sqrt{2} \left(-27 \sqrt{7}+41 i\right)+9 i \sqrt{-7+3 i \sqrt{7}}+21 \sqrt{7 \left(-7+3 i \sqrt{7}\right)}\right)}{98784}\right\}.
\end{multline*}

By \eqref{equ:Bapat} or \eqref{equ:Bapat1}, a closed formula for $R(n)$, the resistance between nodes 1 and $n$ of the linear 3-tree of size $n,$ is given by
\begin{equation}\label{equ:Rexact}
R(n) =
\frac{\sum_{i=1}^9 n_i(n-2)
r_i^{n-2}}
{\sum_{i=1}^5 d_i r_i^{n-1}}.
\end{equation}
 
The following facts about the roots will prove useful in obtaining asymptotic forms and error terms. We observe that $r_1,r_2,r_3$ lie on the unit circle in the complex plane and hence may be ignored when obtaining asymptotic and asymptotic error terms. Similarly $r_4,r_7,r_9$ lie inside the unit disk and may also be ignored in asymptotic calculations. The dominant root is $|r_5| \approx 4.42$ and suffices for the calculation of the asymptotic limit.  However $|r_6| \approx 2.1$ and
$|r_8| \approx 2.1$ have absolute values greater than one implying they contribute to the error term. 

To complete the proof of \eqref{equ:conjecturetoprove}, note that by
  \eqref{equ:Rexact} and the comments on the roots just made, we may define a function
$$
R^{Asymptotic}(n) = 
\frac{n_{5}(n-2) r_5^{n-2}}{d_5 r_5^{n-1}},
$$
with 
$$
R^{Asymptotic}(n) \approx
R(n).
$$

But then
$$
R(n+1)-R(n) \approx
R^{Asymptotic}(n+1) -
R^{Asymptotic}(n) =
\frac{n_{5,1}}{d_5 r_5}
= \frac{1}{14}.
$$

An error term for $R(n)$ may be obtained using the roots lying outside the unit circle as discussed above showing that convergence is geometric. 

\section{Conclusion}
This paper has formalized a procedure introduced by 
\cite{recursion} and applied it to the linear 3-tree. This allows proof of a conjecture by Barret, Evans, and Francis and also provides an exact formula for the resistance distance between node 1 and node $n$ in a linear 3-tree.

The formal procedure introduced seems to have independent interest in its own right and may be applicable to a wider variety of graph families whose adjacency matrices are banded (or nearly banded). Whether the procedure converges, as well as how one might improve the efficiency of it remain open questions.


\vspace{1cm}
\textbf{Disclosure statement.} The authors declare that they have no conficting financial interests with the results of this paper.

\end{document}